 \let\mathscr\relax
\theoremstyle{definition}
\newtheorem{defin}{Definition}[section]
\theoremstyle{definition}
\theoremstyle{plain}
\newtheorem{theo}[defin]{Theorem}
\theoremstyle{plain}
\newtheorem{prop}[defin]{Proposition}
\theoremstyle{plain}
\newtheorem{lem}[defin]{Lemma}
\theoremstyle{plain}
\newtheorem{cor}[defin]{Corollary}
\theoremstyle{definition}
\newtheorem{rmk}[defin]{Remark}
\theoremstyle{definition}
\theoremstyle{definition}
\theoremstyle{plain}
\theoremstyle{definition}
\theoremstyle{definition}
\theoremstyle{plain}
\theoremstyle{plain}
\theoremstyle{plain}
\theoremstyle{definition}
\theoremstyle{plain}
\theoremstyle{definition}
\theoremstyle{definition}
\newtheorem*{defin*}{Definition}
\theoremstyle{definition}
\newtheorem*{ex*}{Example}
\theoremstyle{plain}
\newtheorem*{theo*}{Theorem}
\theoremstyle{plain}
\theoremstyle{plain}
\newtheorem*{conj*}{Conjecture}
\newtheorem*{prop*}{Proposition}
\theoremstyle{plain}
\newtheorem*{lem*}{Lemma}
\theoremstyle{plain}
\newtheorem*{cor*}{Corollary}
\theoremstyle{definition}
\newtheorem*{rmk*}{Remark}
\theoremstyle{definition}
\newtheorem*{exe*}{Exercise}
\theoremstyle{plain}
\newtheorem{theoA}{Theorem}
\theoremstyle{plain}
\theoremstyle{plain}
\theoremstyle{plain}
\theoremstyle{plain}
\theoremstyle{plain}
\newtheorem{corA}[theoA]{Corollary}
\numberwithin{equation}{section}
\def\thm@space@setup{%
  \thm@preskip=\parskip \thm@postskip=0pt
}
\setlist[enumerate]{label=(\roman*)}
\def\R{{\mathbf{R}}} 
\def\irr{{\rm Irr}}
\def\Fl{{\rm Fl}}
\def\c{{\mathbf{C}}} 
\def\z{{\mathbf{Z}}} 
\def\G{{\mathbf{G}}} 
\def\K{{\mathbf{K}}} 
\def\L{{\mathbf{L}}} 
\def\M{{\mathbf{M}}} 
\def\T{{\mathbf{T}}} 
\def\P{{\mathbf{P}}} 
\def\CL{{\mathcal{L}}} 
\newcommand{\uset}[3][0ex]{%
  \mathrel{\mathop{#3}\limits_{
    \vbox to#1{\kern-7\ex@
    \hbox{$\scriptstyle#2$}\vss}}}}
\newcommand{\blocktheorem}[1]{%
  \csletcs{old#1}{#1}
  \csletcs{endold#1}{end#1}
  \RenewDocumentEnvironment{#1}{o}
    {\par\addvspace{1.5ex}
     \noindent\begin{minipage}{\textwidth}
     \IfNoValueTF{##1}
       {\csuse{old#1}}
       {\csuse{old#1}[##1]}}
    {\csuse{endold#1}
     \end{minipage}
     \par\addvspace{1.5ex}}
}
\theoremstyle{plain}
\newtheorem*{iAMconj*}{The inductive Alperin--McKay Conjecture}
\theoremstyle{plain}
\newtheorem*{CTC*}{The Character Triple Conjecture}
\def\blfootnote{\gdef\@thefnmark{}\@footnotetext}
\title{
{\huge\bf Webb's conjecture and generalised Harish-Chandra theory}\\
\author{Damiano Rossi}
\date{}
\blfootnote{\emph{$2010$ Mathematical Subject Classification:} $20$G$40$, $20$C$20$, $55$P$10$, $55$P$91$.
\\
\emph{Key words and phrases:} Webb's conjecture, $p$-subgroup complex, finite reductive groups, Deligne--Lusztig theory, generalised Harish-Chandra theory.
\\
This work is supported by the Walter Benjamin Programme of the DFG - Project number 525464727. I am grateful to Gunter Malle for providing helpful comments on an earlier version of the manuscript.}
}
\begin{document}

\renewcommand{\thetheoA}{\Alph{theoA}}

\renewcommand{\thepropA}{\Alph{propA}}

\renewcommand{\theconjA}{\Alph{conjA}}

\maketitle

\begin{abstract}
Webb's conjecture states that the orbit space of the Brown complex of a finite group at any given prime $\ell$ is contractible. This conjecture was proved by Symonds in 1998. In this paper, we suggest a generalisation of Webb's conjecture for finite reductive groups. This is done by associating to each irreducible character a new simplicial complex defined in terms of Deligne--Lusztig theory. We then show that our conjecture follows from a condition, called ($e$-HC-conj) below, related to generalised Harish-Chandra theory. In particular, using earlier results of the author, we prove our conjecture and recover Symonds result for finite reductive groups under mild restrictions on the prime $\ell$. Finally, we show that the condition ($e$-HC-conj) is implied by the contractibility of the orbit spaces associated to our newly defined complex offering an unexplored topological approach to proving the uniqueness of $e$-cuspidal pairs up to conjugation.
\end{abstract}

\section{Introduction}

The Brown complex of a finite group $G$ at a prime $\ell$ is defined as the simplicial complex $\Delta(\mathcal{S}_\ell^\star(G))$ associated to the poset $\mathcal{S}_\ell^\star(G)$ of non-trivial $\ell$-subgroups of $G$ ordered by inclusion. Its study was initiated by Brown \cite{Bro75} while its importance in the study of finite group theory was highlighted by the work of Quillen \cite{Qui78}. Here, we are interested in a statement that was first conjectured by Webb. For this observe that $G$ has a natural action on the poset $\mathcal{S}_\ell^\star(G)$ induced by conjugation that endows $\Delta(\mathcal{S}_\ell^\star(G))$ with a structure of a $G$-simplicial complex. Webb conjectured in \cite{Web87} that the associated orbit space $\Delta(\mathcal{S}_\ell^\star(G))/G$ is contractible and proved in \cite{Web91} that it is always mod $\ell$ acyclic. Webb's conjecture was fully proved by Symonds \cite{Sym98} and later extended to orbit spaces of fusion systems by Linckelmann \cite{Lin09}.

Suppose now that $G=\G^F$ is a finite reductive group, for some connected reductive group $\G$ defined over an algebraically closed field of positive characteristic and some Frobenius endomorphism $F:\G\to \G$ defining an $\mathbb{F}_q$-structure on $\G$, with $q$ a prime power. Let $e$ be any positive integer and $\chi$ any irreducible complex character of $\G^F$. In this paper, we associate to each choice of $e$ and $\chi$ a $\G^F$-simplicial complex $\Delta(\CL_e^\star(\chi))$ defined in terms of Deligne--Lusztig theory (see Definition \ref{def:New complex}) in such a way that $\Delta(\CL_e^\star(1_{\G^F}))$ is homotopy equivalent to the Brown complex $\Delta(\mathcal{S}_\ell^\star(\G^F))$ whenever $\ell$ is a good prime for $\G$ not dividing $q$ and $e$ is the order of $q$ modulo $\ell$ (see Corollary \ref{cor:Comparison of orbit spaces} for a precise statement). It is then natural to ask whether the orbit space $\Delta(\CL_e^\star(\chi))/\G^F$ is contractible for every $e$ and $\chi$. Our first result reduces this problem to a well-known condition, here denoted by ($e$-HC-conj), arising in generalised Harish-Chandra theory stating that the so-called $e$-cuspidal pairs are unique up to conjugation (see Definition \ref{def:HC theory}).

\begin{theoA}
\label{thm:Main}
Let $\G^F$ be a finite reductive group and consider a positive integer $e$ and an irreducible complex character $\chi$ of $\G^F$. If the condition {\rm ($e$-HC-conj)} holds below $\chi$, then the orbit space $\Delta(\CL_e^\star(\chi))/\G^F$ is contractible.
\end{theoA}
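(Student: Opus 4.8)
The plan is to reduce the contractibility of $\Delta(\CL_e^\star(\chi))/\G^F$ to a combinatorial/topological statement about the poset $\CL_e^\star(\chi)$ itself, exploiting the structure of generalised Harish-Chandra theory. The key observation should be that under ($e$-HC-conj) below $\chi$, the $e$-cuspidal pairs lying below $\chi$ form a single $\G^F$-orbit, and this orbit provides a \emph{canonical basepoint} around which the orbit space can be contracted. Concretely, I expect $\CL_e^\star(\chi)$ to be built from chains of $e$-split Levi subgroups (or the associated pairs) compatible with the Deligne--Lusztig data of $\chi$, with the unique $e$-cuspidal pair $(\bL,\lambda)$ below $\chi$ sitting at the ``bottom'' of every maximal chain. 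So the first step is to make precise the combinatorial description of $\CL_e^\star(\chi)$ from Definition~\ref{def:New complex} and identify how the $\G^F$-action permutes its simplices.

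\emph{Second}, I would establish that the quotient poset $\CL_e^\star(\chi)/\G^F$ has a largest (or smallest) element, or more robustly that it is conically contractible in the sense of Quillen: using ($e$-HC-conj), the map sending a chain to its ``$e$-cuspidal base'' is well defined on orbits precisely because that base is unique up to conjugacy, and it furnishes a homotopy between the identity on $\Delta(\CL_e^\star(\chi)/\G^F)$ and a constant map. Here I would invoke the standard fact that an order-preserving poset map admitting a comparison with the identity (a ``closure operator'' or a conical contraction) induces a homotopy equivalence on the associated simplicial complex; combined with $\Delta(P/G) \simeq \Delta(P)/G$ for the quotient poset, this gives contractibility of the orbit space. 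The subtlety is that contractibility of the \emph{orbit space} need not follow from contractibility of the orbit \emph{poset}'s nerve unless one is careful — but since $\Delta(\CL_e^\star(\chi))/\G^F$ is by construction the geometric realisation of the quotient poset (or a suitable subdivision thereof), this identification should go through, perhaps after passing to the barycentric subdivision so that the $\G^F$-action is admissible.

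\emph{Third}, I would handle the inductive bookkeeping: the hypothesis is ``($e$-HC-conj) holds \emph{below} $\chi$'', meaning it holds for all the relevant pairs appearing in the decomposition of $\chi$, so the contraction argument must be applied not just at the top but compatibly through the poset. I expect the cleanest route is to show directly that the quotient poset $\CL_e^\star(\chi)/\G^F$ is isomorphic to (the non-trivial part of) the poset of $e$-split Levi subgroups of the unique $e$-cuspidal pair's normaliser quotient, which is a poset with $\hat 0$ removed but still coning off to the Levi containing the cuspidal datum — hence contractible. \textbf{The main obstacle} I anticipate is precisely the passage from the abstract simplicial complex $\Delta(\CL_e^\star(\chi))$ with its $\G^F$-action to a clean poset-theoretic model of the orbit space on which a conical contraction can be written down: one must verify that the $\G^F$-orbits of chains are parametrised by chains in a genuine quotient poset (no collapsing of distinct chains to the same orbit in a way that destroys the poset structure), which typically forces working with the subdivision and checking the action is ``regular'' there. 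Once that model is in place, ($e$-HC-conj) does the real work by collapsing the many conjugacy classes of $e$-cuspidal pairs to one, and the contraction is then essentially formal.
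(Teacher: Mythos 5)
There is a genuine gap, and it sits exactly where you flagged your ``main obstacle'': your argument hinges on identifying the orbit space $\Delta(\CL_e^\star(\chi))/\G^F$ with the order complex of the quotient poset $\CL_e^\star(\chi)/\G^F$ (possibly after barycentric subdivision), and this identification is false in general. The action of $\G^F$ on the order complex of a poset is already admissible (the stabiliser of a chain fixes it pointwise), so subdividing buys nothing; the real problem is that two chains whose members are \emph{elementwise} $\G^F$-conjugate need not be \emph{simultaneously} conjugate, so several distinct $\G^F$-orbits of simplices can lie over one and the same chain of orbits. Hence $\Delta(P)/G$ is genuinely larger than $\Delta(P/G)$, and a conical contraction of the quotient poset (which indeed exists once ($e$-HC-conj) gives a single orbit of $e$-cuspidal pairs below $\chi$) says nothing about the space you actually need to contract. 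A sanity check: if $\Delta(P)/G\simeq\Delta(P/G)$ were available, Webb's conjecture itself would be immediate, since the single conjugacy class of Sylow $\ell$-subgroups cones off the quotient poset of $\mathcal{S}_\ell^\star(G)$; yet the theorem required the nontrivial arguments of Symonds, Bux and Steinberg. A related symptom is that your proposal only uses ($e$-HC-conj) at the top pair $(\G,\chi)$, whereas the hypothesis of the theorem is that it holds \emph{below} $\chi$; the extra strength must be consumed somewhere, namely in local conditions at smaller Levi subgroups, which a single global contraction never sees.

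The paper's proof handles precisely this difficulty by discrete Morse theory: via Lemma \ref{lem:Comparability graph} one replaces $\Delta(\CL_e^\star(\chi))$ by the flag complex of the comparability graph and applies Bux's criterion (Proposition \ref{prop:Bux}). The minimal vertices are the $e$-cuspidal pairs below $(\G,\chi)$, on which $\G^F$ acts transitively by ($e$-HC-conj) for $\chi$; and for every non-minimal simplex $\sigma$ the descending link $\Gamma^\sigma_\downarrow$ is identified with the comparability graph of $\CL_e^\star(\lambda(\sigma))$, whose minimal vertices are the $e$-cuspidal pairs of $(\L(\sigma),F)$ below $\lambda(\sigma)$; here ($e$-HC-conj) \emph{below} $\chi$, applied to $\lambda(\sigma)$ in $\L(\sigma)^F$, together with the containment $\L(\sigma)^F\leq\G^F_\sigma$, gives the required transitivity of the simplex stabilisers. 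If you want to salvage your outline, you would have to replace the quotient-poset step by some such stabiliser-level (local) argument; as written, the contraction is performed on the wrong object.
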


While the condition ($e$-HC-conj) is expected to hold in general, at the present time it is only known under particular assumptions. We recall the main known cases in Remark \ref{rmk:HC theory}. In particular, by applying \cite[Proposition 4.10]{Ros24}, we obtain the following corollary. We refer the reader to Definition \ref{def:Primes, Gamma} for a description of the set of primes $\Gamma(\G,F)$ considered below.

\begin{corA}
\label{cor:Main}
Let $\G^F$ be a finite reductive group defined over $\mathbb{F}_q$ and consider a positive integer $e$. If there exists some prime $\ell\in\Gamma(\G,F)$ with $\ell\geq 5$ and such that $e$ is the order of $q$ modulo $\ell$, then the orbit space $\Delta(\CL_e^\star(\chi))/\G^F$ is contractible for every irreducible complex character $\chi$ of $\G^F$.
\end{corA}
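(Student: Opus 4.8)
The plan is to deduce Corollary~\ref{cor:Main} from Theorem~\ref{thm:Main} by verifying that, under the stated hypotheses on $\ell$, the condition ($e$-HC-conj) holds below every irreducible character $\chi$ of $\G^F$. First I would fix a prime $\ell\in\Gamma(\G,F)$ with $\ell\geq 5$ such that $e=e_\ell(q)$ is the multiplicative order of $q$ modulo $\ell$. The point of passing into the set $\Gamma(\G,F)$ is precisely that, for such primes, the results on generalised Harish-Chandra theory from \cite{Ros24} apply: by \cite[Proposition~4.10]{Ros24} the $e$-cuspidal pairs $(\L,\lambda)$ of $\G^F$ are unique up to $\G^F$-conjugacy, i.e. ($e$-HC-conj) holds for $\G^F$ at this value of $e$. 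Here one uses that $\ell\geq 5$ (so that $\ell$ is in particular good for every simple factor and the relevant prime is neither $2$ nor $3$, avoiding the known exceptional behaviour) and that $\ell\in\Gamma(\G,F)$ controls the remaining bad configurations coming from the centre and from the component group of $\z(\G)$.

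The second step is to check that ($e$-HC-conj) holding for $\G^F$ at $e$ in fact entails that it holds \emph{below} $\chi$ in the sense required by Theorem~\ref{thm:Main}, for every $\chi\in\Irr(\G^F)$. This is where one unwinds Definition~\ref{def:HC theory}: the statement ``($e$-HC-conj) holds below $\chi$'' refers to the $e$-cuspidal pairs lying under $\chi$ (equivalently, those $(\L,\lambda)$ such that $\chi$ occurs in $\R_\L^\G(\lambda)$, or the corresponding statement inside the relevant Lusztig series), and uniqueness up to conjugation of all $e$-cuspidal pairs of $\G^F$ certainly restricts to uniqueness of those sitting below a fixed $\chi$. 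One should also note that the hypothesis is uniform in $\chi$: since the chosen $\ell$ does not depend on $\chi$, a single application of \cite[Proposition~4.10]{Ros24} covers all irreducible characters simultaneously.

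Finally, with ($e$-HC-conj) verified below $\chi$, Theorem~\ref{thm:Main} applies verbatim and yields that $\Delta(\CL_e^\star(\chi))/\G^F$ is contractible; since $\chi$ was arbitrary, this proves the corollary. The only genuine subtlety — and the step I would expect to require the most care in writing up — is the bookkeeping in the first step: making sure that the hypothesis ``$\ell\in\Gamma(\G,F)$ with $\ell\geq 5$ and $e=e_\ell(q)$'' matches exactly the hypotheses of \cite[Proposition~4.10]{Ros24}, including any conditions on $q$ being compatible with the $\mathbb{F}_q$-structure and on $\G$ (e.g. the treatment of the centre, of Frobenius twists, and of the few small-rank exceptions). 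Once that matching is done, no further topology or representation theory is needed: the corollary is a formal consequence of Theorem~\ref{thm:Main} and the cited result.
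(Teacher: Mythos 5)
Your overall route is the same as the paper's: deduce the corollary from Theorem \ref{thm:Main} together with \cite[Proposition 4.10]{Ros24}, which is exactly how the paper argues (via Remark \ref{rmk:HC theory}(v)). There is, however, an inaccuracy in your second step that you should repair. According to Definition \ref{def:HC theory}, ``($e$-HC-conj) holds below $\chi$'' does \emph{not} merely mean that the $e$-cuspidal pairs $(\L,\lambda)\leq_e(\G,\chi)$ are $\G^F$-conjugate; it means that for \emph{every} $e$-pair $(\K,\kappa)\ll_e(\G,\chi)$ the character $\kappa$ satisfies ($e$-HC-conj) inside $\K^F$. Theorem \ref{thm:Main} genuinely needs this stronger, Levi-wise form (its proof invokes transitivity of $\L(\sigma)^F$ on the $e$-cuspidal pairs below $\lambda(\sigma)$ for proper $e$-split Levi subgroups $\L(\sigma)<\G$), so your asserted implication ``uniqueness of the $e$-cuspidal pairs of $\G^F$ under $\chi$ entails the `below' condition'' does not hold as stated. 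Relatedly, \cite[Proposition 4.10]{Ros24} does not say that \emph{all} $e$-cuspidal pairs of $\G^F$ are conjugate (which is false in general, as different characters lie above non-conjugate cuspidal pairs); it gives conjugacy of those lying below a fixed character. The repair is straightforward and is what the paper implicitly does: one applies the cited result not only to $(\G,F)$ but to every $e$-split Levi subgroup $\K$ occurring in a pair $(\K,\kappa)\ll_e(\G,\chi)$, observing that the hypotheses ($\ell\in\Gamma(\K,F)$, $\ell\geq 5$, $e=e_\ell(q)$) are inherited by such $\K$, so that ($e$-HC-conj) holds for each such $\kappa$ in $\K^F$; this yields the hypothesis of Theorem \ref{thm:Main} verbatim, and the rest of your argument then goes through.
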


As explained before, by considering $\chi=1_{\G^F}$ in the above corollary, we recover the contractibility of $\Delta(\mathcal{S}_\ell^\star(\G^F))/\G^F$ as conjectured by Webb and proved by Symonds. Our proof of Theorem \ref{thm:Main}, and hence of Corollary \ref{cor:Main}, relies on Bestvina--Brady discrete Morse theory \cite{Bes-Bra97}, that we apply through Bux's work \cite{Bux99}, together with techniques from Deligne--Lusztig theory and representation theory of finite reductive groups.

Before presenting our final result, we highlight the importance of the condition ($e$-HC-conj) in group representation theory. In fact, using ($e$-HC-conj) it is possible to determine a partition of the set of irreducible characters into so-called $e$-Harish-Chandra series. This partition has applications to several problems: from the parametrisation of the Brauer $\ell$-blocks in non-defining characteristic (see \cite{Bro-Mal-Mic93}, \cite{Cab-Eng94}, \cite{Cab-Eng99}, and \cite{Kes-Mal15}), to the study of the long-standing local-global conjectures (see \cite{Kes-Mal13}, \cite{Cab-Spa13}, \cite{Mal-Spa16}, \cite{Cab-Spa17I}, \cite{FLLMZ19}, \cite{Ros-Clifford_automorphisms_HC}, \cite{Ros24}, and \cite{Ros-Unip}), to even the study of the representations of $p$-adic groups (see \cite{Lan23}). Given the numerous implications of the condition ($e$-HC-conj), it would be desirable to prove it unconditionally. While this is beyond the scope of our paper, in the next result we obtain a converse of Theorem \ref{thm:Main} which gives a criterion for the validity of ($e$-HC-conj) in terms of topological properties that might be more easily verifiable. We refer the reader to Section \ref{sec:Nex complex} for the definition of $e$-pair and of the order relation $\ll_e$.

\begin{theoA}
\label{thm:Main, converse}
Let $\G^F$ be a finite reductive group and consider a positive integer $e$ and an irreducible character $\chi$ of $\G^F$. If the orbit space $\Delta(\CL_e^\star(\lambda))/\L^F$ is contractible for every $e$-pair $(\L,\lambda)\ll_e(\G,\chi)$, then the condition {\rm($e$-HC-conj)} holds below $\chi$.
\end{theoA}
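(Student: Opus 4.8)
The plan is to argue by induction on the number of $e$-pairs $\ll_e(\G,\chi)$, with a computation of connected components of the orbit space as essentially the only topological ingredient. If $\chi$ is $e$-cuspidal the statement is trivial (the only $e$-cuspidal pair $\ll_e(\G,\chi)$ is then $(\G,\chi)$), so I would assume otherwise; then $\CL_e^\star(\chi)$ is nonempty and contains every $e$-cuspidal pair $\ll_e(\G,\chi)$. The first step is to observe that the hypothesis descends to proper pairs: for an $e$-pair $(\M,\mu)\ll_e(\G,\chi)$ with $\M\neq\G$, transitivity of $\ll_e$ embeds the set of $e$-pairs $\ll_e(\M,\mu)$ into the set of $e$-pairs $\ll_e(\G,\chi)$, so every orbit space required by the statement for $(\M,\mu)$ is contractible by assumption; since $(\G,\chi)$ is not $\ll_e(\M,\mu)$, this set is strictly smaller, the inductive hypothesis applies to $(\M,\mu)$, and {\rm ($e$-HC-conj)} holds below $\mu$. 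It therefore remains to prove that all $e$-cuspidal pairs below $(\G,\chi)$ form a single $\G^F$-conjugacy class.

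For this I would attach to each $e$-pair $(\M,\mu)\in\CL_e^\star(\chi)$ its $e$-cuspidal support. The set of $e$-pairs $\ll_e(\M,\mu)$ is finite and nonempty, hence has minimal elements, and these are $e$-cuspidal by Deligne--Lusztig adjunction; by {\rm ($e$-HC-conj)} below $\mu$ they form a single $\M^F$-orbit, hence a single $\G^F$-orbit. Let $S$ be the finite set of $\G^F$-orbits of $e$-cuspidal pairs below $(\G,\chi)$, endowed with the discrete topology. The above produces a map $\Phi\colon\CL_e^\star(\chi)\to S$, and the crucial point is that it is order-invariant: if $(\M_1,\mu_1)\ll_e(\M_2,\mu_2)$ in $\CL_e^\star(\chi)$, then by transitivity any $e$-cuspidal pair below $(\M_1,\mu_1)$ also lies below $(\M_2,\mu_2)$, so $\Phi(\M_1,\mu_1)=\Phi(\M_2,\mu_2)$. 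Hence $\Phi$ is constant on comparable pairs, so on connected components of $\Delta(\CL_e^\star(\chi))$, and it realises a continuous map $f\colon\Delta(\CL_e^\star(\chi))\to S$; moreover $f$ is $\G^F$-invariant, since $\Phi$ is $\G^F$-equivariant and $S$ is precisely the set of $\G^F$-orbits, so $f$ factors through a continuous map $\bar f\colon\Delta(\CL_e^\star(\chi))/\G^F\to S$.

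To finish, observe that $\bar f$ is surjective: an $e$-cuspidal pair $(\L,\lambda)$ below $(\G,\chi)$ belongs to $\CL_e^\star(\chi)$ and is its own $e$-cuspidal support, so its orbit is the image of its class. By hypothesis $\Delta(\CL_e^\star(\chi))/\G^F$ is contractible, in particular connected, and a continuous map from a connected space to a discrete space is constant; hence $S$ is a singleton. Together with the first step this shows that {\rm ($e$-HC-conj)} holds below $\chi$, completing the induction.

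I expect the obstacles to be combinatorial rather than topological: one must check, from the definitions of $e$-pair and $\ll_e$ in Section~\ref{sec:Nex complex}, the transitivity of $\ll_e$, the identification of minimal $e$-pairs with $e$-cuspidal pairs via adjunction, and the well-definedness of the $e$-cuspidal support once {\rm ($e$-HC-conj)} below $\mu$ is known; the degenerate case $\CL_e^\star(\chi)=\emptyset$ should also be isolated at the outset. By contrast the topological input reduces to $\pi_0$ of a contractible space, the genuinely hard work lying instead in the forward implication, Theorem~\ref{thm:Main}, which requires the Bestvina--Brady machinery.
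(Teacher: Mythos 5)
Your argument is correct and reaches the conclusion by a genuinely different route from the paper. The paper's proof takes a counterexample $(\K,\kappa)\ll_e(\G,\chi)$ minimal with respect to $\dim\K$, uses Theorem \ref{thm:Bux} to check that every descending link $\Fl(\Gamma(\CL_e^\star(\kappa))^\tau_\downarrow)/\K^F_\tau$ is contractible, and then invokes the Morse-theoretic Lemma \ref{lem:Bux} to conclude that $\Delta(\CL_e^\star(\kappa))/\K^F$ is homotopy equivalent to the discrete space of $\K^F$-orbits of $e$-cuspidal pairs below $(\K,\kappa)$, so that contractibility forces a single orbit. You run essentially the same induction (on the number of pairs below the given one rather than on $\dim\K$; the two are interchangeable), but you replace the Morse-theoretic step by an explicit ``$e$-cuspidal support'' map: the inductive hypothesis makes the support well defined and constant on $\ll_e$-comparable vertices, hence it induces a continuous $\G^F$-invariant map from $\Delta(\CL_e^\star(\chi))$ to the discrete set of orbits of cuspidal pairs, and connectedness of the (nonempty, contractible) orbit space forces that set to be a singleton. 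What your route buys is elementarity and a formally stronger statement: only $\pi_0$ is used, so mere connectedness of the relevant orbit spaces already yields ($e$-HC-conj) below $\chi$, and neither Lemma \ref{lem:Bux} nor Theorem \ref{thm:Bux} is needed for this direction. What the paper's route buys is the sharper intermediate conclusion that, under the inductive hypothesis, the orbit space is homotopy equivalent to the finite discrete set of orbits of $e$-cuspidal pairs, which ties Theorem \ref{thm:Main, converse} to the same machinery used for Theorem \ref{thm:Main}. One shared caveat: like the paper's own proofs, you implicitly read ($e$-HC-conj), which is stated via the one-step relation $\leq_e$, as giving transitivity on the cuspidal pairs minimal for $\ll_e$; since the published argument makes the same identification, this is not a defect of your proposal relative to the paper.
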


The paper is organised as follows. In Section \ref{sec:Nex complex}, we introduce the relevant background material from Deligne--Lusztig theory and define the simplicial complex $\Delta(\CL_e^\star(\chi))$. Here, we also show how to recover the Brown complex $\Delta(\mathcal{S}_\ell^\star(\G^F))$ by taking $\chi=1_{\G^F}$ (see Corollary \ref{cor:Comparison of orbit spaces}). In Section \ref{sec:HC theory}, we introduce the required definitions from generalised Harish-Chandra theory and define the condition ($e$-HC-conj). Section \ref{sec:Morse theory} is devoted to discrete Morse theory. We recall the necessary results using the language of directed graphs and flag complexes as presented in \cite{Bux99}. Finally, in Section \ref{sec:Proofs} we obtain the proofs of Theorem \ref{thm:Main}, Corollary \ref{cor:Main}, and Theorem \ref{thm:Main, converse}.

\section{Simplicial complexes defined by Deligne--Lusztig theory}
\label{sec:Nex complex}

Let $\G$ be a connected reductive group defined over an algebraically closed field of positive characteristc $p$ and $F:\G\to \G$ a Frobenius endomorphism edowing the variety $\G$ with an $\mathbb{F}_q$-structure, where $q$ is a power of $p$. In their seminal papers \cite{Del-Lus76} and \cite{Lus76}, Deligne--Lusztig and Lusztig introduced a generalisation of Harish-Chandra induction defined in terms of $\ell$-adic cohomology of Deligne--Lusztig varieties. This type of induction is defined for every $F$-stable Levi subgroup $\L$ of $\G$ contained in a parabolic subgroup $\P$ and yields a functor
\[\R_{\L\leq \P}^\G:\mathbb{Z}\irr\left(\L^F\right)\to\mathbb{Z}\irr\left(\G^F\right)\]
with adjoint
\[^*\R_{\L\leq \P}^\G:\mathbb{Z}\irr\left(\G^F\right)\to\mathbb{Z}\irr\left(\L^F\right).\]
It is expected, and known in most cases, that the above functors are independent of the choice of the parabolic subgroup $\P$ (see \cite[Theorem 3.3.7 and Theorem 3.3.8]{Gec-Mal20}). All the arguments and statements considered below hold independently of the choice of the ambient parabolic subgroup and we will therefore suppress the symbol $\P$ from the above notation.

Consider now a positive integer $e$. Following \cite{Bro-Mal92}, for every $F$-stable torus $\T$ of $\G$, we denote by $\T_e$ the (unique) Sylow $e$-torus of $\T$. We then say that an $F$-stable Levi subgroup is \textit{$e$-split} if it satisfies $\L=\c_\G(\z^\circ(\L)_e)$. One can show that the $e$-split Levi subgroups of $(\G,F)$ are exactly the centralisers of $e$-tori, that is, tori satisfying $\T=\T_e$. We denote by $\CL_e(\G,F)$ the set of all $e$-split Levi subgroups and by $\CL_e^\star(\G,F)$ the subset of proper $e$-split Levi subgroups, i.e. subgroups strictly contained in $\G$. Next, we say that $(\L,\lambda)$ is an \textit{$e$-pair} of $(\G,F)$ if $\L$ is an $e$-split Levi subgroup of $(\G,F)$ and $\lambda$ is an irreducible character of $\L^F$. Following \cite[Notation 1.11]{Cab-Eng99}, we can define a binary relation, denoted by $\leq_e$, on the set of all $e$-pairs of $(\G,F)$ by setting $(\L,\lambda)\leq_e(\M,\mu)$ whenever $\L\leq \M$ and $\mu$ appears as an irreducible constituent of $\R_\L^\M(\lambda)$. To obtain a partial order relation on the set of $e$-pairs, we then consider the transitive closure $\ll_e$ of $\leq_e$. Observe that the relation $\leq_e$ is expected to be transitive, however this is not yet known in full generality (see \cite[Proposition 4.5]{Ros24} for a list of partial results). With this notation, we can now introduce the following simplicial complex.

\begin{defin}
\label{def:New complex}
Let $e$ be a positive integer and $\chi$ an irreducible character of $\G^F$. We denote by $\CL_e(\chi)$ the set of $e$-pairs $(\L,\lambda)$ of $(\G,F)$ satisfying $(\L,\lambda)\ll_e(\G,\chi)$. Furthermore, we denote by $\CL_e^\star(\chi)$ the subset of $\CL_e(\chi)$ consisting of those pairs $(\L,\lambda)$ with $\L<\G$. The set $\CL_e^\star(\chi)$ is a poset with respect to $\ll_e$ and we consider the associated order complex $\Delta(\CL_e^\star(\chi))$. Observe that a simplex $\sigma$ in $\Delta(\CL_e^\star(\G,F))$ is a totally ordered subset of $\CL_e^\star(\chi)$ with respect to $\ll_e$. In particular, there exists a minimal $e$-pair in $\sigma$ that we denote by $(\L(\sigma),\lambda(\sigma))$.
\end{defin}

In our next result, we show that the simplicial complex defined above is a natural generalisation of the complex studied in \cite{Ros-Homotopy}.

\begin{lem}
\label{lem:Comparison of posets}
The posets $\CL_e^\star(\G,F)$ and $\CL_e^\star(1_{\G^F})$ are naturally isomorphic. In particular, the complexes $\Delta(\CL_e^\star(\G,F))$ and $\Delta(\CL_e^\star(1_{\G^F}))$ are canonically homeomorphic.
\end{lem}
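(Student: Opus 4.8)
The plan is to exhibit an explicit order-preserving bijection between $\CL_e^\star(\G,F)$ and $\CL_e^\star(1_{\G^F})$ and then invoke the standard fact that the order complex functor $\Delta(-)$ sends poset isomorphisms to simplicial (hence geometric) isomorphisms. First I would recall that the poset $\CL_e^\star(\G,F)$ consists of the proper $e$-split Levi subgroups of $(\G,F)$ ordered by inclusion, whereas $\CL_e^\star(1_{\G^F})$ consists of the $e$-pairs $(\L,\lambda)\ll_e(\G,1_{\G^F})$ with $\L<\G$, ordered by $\ll_e$. The natural candidate for the isomorphism is the first-projection map $(\L,\lambda)\mapsto\L$; to show it is a bijection onto $\CL_e^\star(\G,F)$ the key point is that the $e$-pairs lying below $(\G,1_{\G^F})$ are exactly the pairs of the form $(\L,1_{\L^F})$ for $\L$ an $e$-split Levi subgroup, i.e. that the trivial character is the unique character of $\L^F$ with $(\L,\lambda)\ll_e(\G,1_{\G^F})$.

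To establish this I would argue along a chain realising $\ll_e$ as the transitive closure of $\leq_e$: if $(\L,\lambda)\leq_e(\M,\mu)$ with $\mu$ trivial, then $\lambda$ is a constituent of ${}^*\R_\L^\M(1_{\M^F})$, and by the transitivity of Deligne--Lusztig induction (or, at the bottom level, by the fact that $\R_\T^\M(1_{\T^F})$ for $\T$ a maximal torus decomposes into characters whose adjoints behave compatibly with the trivial character) one gets $\lambda = 1_{\L^F}$; iterating down a chain from $(\G,1_{\G^F})$ shows every pair below it has trivial second component. Concretely, since $1_{\M^F}$ is a constituent of $\R_\T^\M(1_{\T^F})$ for any maximally split torus $\T$, and ${}^*\R_\L^\M \R_\T^\M$ is computed by a Mackey-type formula, the only $e$-pair constituent is $(\L,1_{\L^F})$; alternatively one can cite that $1_{\G^F}$ is $e$-cuspidal only in $\G$ itself has no bearing, rather that the $e$-Harish-Chandra series of $(\G,1_{\G^F})$ below any Levi is the principal one. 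I expect this verification — pinning down precisely which $e$-pairs sit below $(\G,1_{\G^F})$ using only results available in the excerpt, without circularly invoking ($e$-HC-conj) — to be the main (though still routine) obstacle.

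Once the bijection $(\L,1_{\L^F})\leftrightarrow\L$ is in hand, I would check it is an isomorphism of posets in both directions: if $\L\leq\M$ are $e$-split then $1_{\M^F}$ is a constituent of $\R_\L^\M(1_{\L^F})$ (again because the trivial character is always a constituent of Deligne--Lusztig induction of the trivial character, via $\langle \R_\L^\M(1_{\L^F}),1_{\M^F}\rangle = \langle 1_{\L^F},{}^*\R_\L^\M(1_{\M^F})\rangle\neq 0$), so $(\L,1_{\L^F})\leq_e(\M,1_{\M^F})$ and hence $(\L,1_{\L^F})\ll_e(\M,1_{\M^F})$; conversely if $(\L,1_{\L^F})\ll_e(\M,1_{\M^F})$ then along the defining chain each step forces inclusion of Levi subgroups, so $\L\leq\M$. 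This makes the projection a poset isomorphism, and it is manifestly $\G^F$-equivariant, so it is "natural" in the required sense.

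Finally, applying the order complex construction: a simplex of $\Delta(\CL_e^\star(\G,F))$ is a chain $\L_0 < \L_1 < \dots < \L_k$ of proper $e$-split Levi subgroups, which corresponds bijectively to the chain $(\L_0,1_{\L_0^F})\ll_e\cdots\ll_e(\L_k,1_{\L_k^F})$, a simplex of $\Delta(\CL_e^\star(1_{\G^F}))$; this correspondence commutes with taking faces, so it is a simplicial isomorphism, and therefore induces a canonical homeomorphism of geometric realisations. I would close by remarking that under $\chi=1_{\G^F}$ the minimal-pair notation $(\L(\sigma),\lambda(\sigma))$ of Definition \ref{def:New complex} simply records the smallest Levi in the chain together with its trivial character, consistent with the identification.
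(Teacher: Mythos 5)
Your proposal is correct and rests on the same core ingredients as the paper's proof (the bijection $\L\mapsto(\L,1_{\L^F})$, adjunction, and the identity ${}^*\R_\L^\G(1_{\G^F})=1_{\L^F}$), but it handles the one delicate point --- passing from the transitive closure $\ll_e$ back to information about characters --- by a genuinely different route. The paper uses the fact that $1_{\G^F}$ is unipotent to invoke \cite[Theorem 4.6.20 (b)]{Gec-Mal20}, which collapses $(\L,\lambda)\ll_e(\G,1_{\G^F})$ to the one-step relation $(\L,\lambda)\leq_e(\G,1_{\G^F})$, and then applies adjunction once; you instead induct down a chain realising $\ll_e$, noting at each step $(\L,\lambda)\leq_e(\M,1_{\M^F})$ that $0\neq\langle\R_{\L}^{\M}(\lambda),1_{\M^F}\rangle=\langle\lambda,{}^*\R_{\L}^{\M}(1_{\M^F})\rangle=\langle\lambda,1_{\L^F}\rangle$, so the trivial character propagates all the way down. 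Your route is more elementary and self-contained: it needs only adjunction and ${}^*\R_\L^\M(1_{\M^F})=1_{\L^F}$ (the paper's citation \cite[Example 3.3.4]{Gec-Mal20}, applied inside $\M$), and avoids any appeal to unipotent $e$-Harish-Chandra theory, whereas the paper's heavier citation buys a one-line reduction. One caveat: the detour in your second paragraph through $\R_\T^\M(1_{\T^F})$ and a ``Mackey-type formula'' is unnecessary and, as phrased, not really a proof --- the clean argument is exactly the displayed adjunction identity you write later for the converse direction, iterated along the chain. The remaining steps (order preservation in both directions, $\G^F$-equivariance, and deducing the canonical homeomorphism of order complexes from the poset isomorphism) agree with the paper.
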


\begin{proof}
Consider a pair $(\L,\lambda)\in\CL_e^\star(1_{\G^F})$. By definition, we know that $(\L,\lambda)\ll_e(\G,1_{\G^F})$. Since $1_{\G^F}$ is unipotent, we deduce from \cite[Theorem 4.6.20 (b)]{Gec-Mal20} that $(\L,\lambda)\leq_e(\G,1_{\G^F})$. In other words, $1_{\G^F}$ is an irreducible constituent of the virtual character $\R_\L^\G(\lambda)$. Recalling that $\R_\L^\G$ and $^\star\R_\L^\G$ are adjoint, we obtain
\[0\neq \left\langle\R_\L^\G(\lambda),1_{\G^F}\right\rangle=\left\langle\lambda,{^\star\R_\L^\G(1_{\G^F})}\right\rangle.\]
On the other hand ${^\star\R_\L^\G(1_{\G^F})}=1_{\L^F}$ (see, for instance, \cite[Example 3.3.4]{Gec-Mal20}), and therefore we get $\lambda=1_{\L^F}$. It now follows that the map of posets $\CL_e^\star(\G,F)\to \CL_e^\star(1_{\G^F})$ given by $\L\mapsto(\L,1_{\L^F})$ is an isomorphism as required. The induced map of simplicial complexes is then a homeomorphism.
\end{proof}

Recall that each irreducible character $\chi\in\irr(\G^F)$ is invariant under $\G^F$. In particular, the conjugation action of $\G^F$ induces an action of $\G^F$ on the poset $\CL_e^\star(\chi)$. This endows $\Delta(\CL_e^\star(\chi))$ with a structure of a $\G^F$-simplicial complex (see \cite[Definition 6.1.1]{Ben98}) and we can therefore consider the corresponding orbit space $\Delta(\CL_e^\star(\chi))/\G^F$. Combining Lemma \ref{lem:Comparison of posets} with \cite[Theorem A]{Ros-Homotopy}, we can show that this orbit space generalises, as far as finite reductive groups are concerned, the orbit space associated to the Brown complex as considered in Webb's conjecture. First, we need to recall the definition of the set of primes $\pi(\G,F)$ from \cite[Definition 2.1]{Ros-Homotopy}.

\begin{defin}
\label{def:Primes, Pi}
Denote by $\pi'(\G,F)$ the set of odd primes $\ell$ that are good for $\G$, do not divide $q$ nor $|\z(\G_{\rm sc})^F|$, and with $\ell\neq 3$ when $(\G,F)$ has a rational component of type ${^3 {\bf D}}_4$. If $(\G^*,F^*)$ is in duality with $(\G,F)$ (see \cite[Definition 1.5.17]{Gec-Mal20}), then we define $\pi(\G,F)$ as the subset of those $\ell\in\pi'(\G,F)$ not dividing $|\z(\G)^F:\z^\circ(\G)^F|$ nor $|\z(\G^*)^{F^*}:\z^\circ(\G^*)^{F^*}|$.
\end{defin}

We can now show how to recover the Brown complex, and its orbit space, from the complex introduced in Definition \ref{def:New complex}.

\begin{cor}
\label{cor:Comparison of orbit spaces}
Let $\ell\in\pi(\G,F)$ not dividing the order of $\z(\G)^F$ and denote by $e=e_\ell(q)$ the order of $q$ modulo $\ell$. Then, there is a $\G^F$-homotopy equivalence
\[\Delta\left(\CL_e^\star\left(1_{\G^F}\right)\right)\simeq\Delta\left(\mathcal{S}_\ell^\star\left(\G^F\right)\right).\]
In particular, we have a homotopy equivalence of orbit spaces
\[\Delta\left(\CL_e^\star\left(1_{\G^F}\right)\right)/\G^F\simeq\Delta\left(\mathcal{S}_\ell^\star\left(\G^F\right)\right)/\G^F.\]
\end{cor}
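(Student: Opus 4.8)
The plan is to deduce the first homotopy equivalence by combining Lemma \ref{lem:Comparison of posets} with the main result of \cite{Ros-Homotopy}, and then to obtain the statement on orbit spaces as a formal consequence. First I would invoke Lemma \ref{lem:Comparison of posets} to identify $\Delta(\CL_e^\star(1_{\G^F}))$ with $\Delta(\CL_e^\star(\G,F))$ via the canonical homeomorphism $\L\mapsto(\L,1_{\L^F})$; since this homeomorphism is visibly equivariant for the conjugation action of $\G^F$ (the action is compatible with passing to trivial characters), it is in fact a $\G^F$-homeomorphism, hence a $\G^F$-homotopy equivalence. It therefore suffices to produce a $\G^F$-homotopy equivalence $\Delta(\CL_e^\star(\G,F))\simeq\Delta(\mathcal{S}_\ell^\star(\G^F))$, and this is exactly the content of \cite[Theorem A]{Ros-Homotopy}, which applies precisely under the hypotheses that $\ell\in\pi(\G,F)$ with $\ell\nmid|\z(\G)^F|$ and $e=e_\ell(q)$. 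Chaining the two $\G^F$-homotopy equivalences gives the first displayed equivalence.

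For the second statement, I would pass to orbit spaces. The point is that a $\G^F$-homotopy equivalence between $\G^F$-simplicial complexes induces a homotopy equivalence on orbit spaces; concretely, if $f\colon X\to Y$ and $g\colon Y\to X$ are mutually inverse up to $\G^F$-homotopy, then they descend to continuous maps $\bar f\colon X/\G^F\to Y/\G^F$ and $\bar g\colon Y/\G^F\to X/\G^F$, and any $\G^F$-homotopy $X\times[0,1]\to X$ (resp. on $Y$) descends to an ordinary homotopy $(X/\G^F)\times[0,1]\to X/\G^F$ because the projection $X\times[0,1]\to (X/\G^F)\times[0,1]$ is the quotient by the $\G^F$-action on the first factor. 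Hence $\bar g\circ\bar f\simeq\mathrm{id}$ and $\bar f\circ\bar g\simeq\mathrm{id}$, so $X/\G^F\simeq Y/\G^F$. Applying this to the equivalence from the first part yields $\Delta(\CL_e^\star(1_{\G^F}))/\G^F\simeq\Delta(\mathcal{S}_\ell^\star(\G^F))/\G^F$.

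The only genuine subtlety is bookkeeping: one must ensure the equivalence furnished by \cite[Theorem A]{Ros-Homotopy} is genuinely $\G^F$-equivariant (as opposed to merely a homotopy equivalence of the underlying spaces), and that the identification in Lemma \ref{lem:Comparison of posets} is equivariant as well — both are straightforward but need to be stated. Everything else is formal. I do not expect any real obstacle here, since the hypotheses of the corollary were chosen exactly so that \cite[Theorem A]{Ros-Homotopy} applies verbatim.
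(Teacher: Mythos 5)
Your proposal is correct and follows exactly the paper's own argument: identify $\Delta(\CL_e^\star(1_{\G^F}))$ with $\Delta(\CL_e^\star(\G,F))$ via Lemma \ref{lem:Comparison of posets}, apply \cite[Theorem A]{Ros-Homotopy} to get the $\G^F$-homotopy equivalence with the Brown complex, and pass to orbit spaces. The extra detail you supply on why equivariant homotopy equivalences descend to quotients is fine but not needed beyond what the paper records.
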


\begin{proof}
The canonical homeomorphism from Lemma \ref{lem:Comparison of posets} yields a $\G^F$-homotopy equivalence between $\Delta(\CL_e^\star(1_{\G^F}))$ and $\Delta(\CL_e^\star(\G,F))$ and hence an equivalence of orbit spaces $\Delta(\CL_e^\star(1_{\G^F}))/\G^F\simeq \Delta(\CL_e^\star(\G,F))/\G^F$. Furthermore, by \cite[Theorem A]{Ros-Homotopy} we have a $\G^F$-homotopy equivalence between $\Delta(\CL_e^\star(\G,F))$ and $\Delta(\mathcal{S}_\ell^\star(\G^F))$ which induces an equivalence between the orbit spaces $\Delta(\CL_e^\star(\G,F))/\G^F$ and $\Delta(\mathcal{S}_\ell^\star(\G^F))/\G^F$. 
\end{proof}

\begin{rmk}
To conclude this section, we point out that the simplicial complex $\Delta(\CL_e^\star(\chi))$ can be seen as a refinement of the complex $\Delta(\CL_e^\star(B))$ introduced in \cite[Definition 3.3]{Ros-Homotopy_Brauer}, where $B$ is a Brauer $\ell$-block and $e$ the order of $q$ modulo $\ell$. In fact, under suitable assumptions, it is possible to show that $\CL_e^\star(B)$ is the union of those $\CL_e^\star(\chi)$ with $\chi$ belonging to $B$ (see \cite[Corollary 4.9]{Ros24}).
\end{rmk}

\section{Generalised Harish-Chandra theory}
\label{sec:HC theory}

Generalised Harish-Chandra theory provides a powerful tool for the study of modular representation theory of finite reductive groups in non-defining characteristic (see \cite[Chapter 3]{Gec-Mal20} and \cite[Part V]{Cab-Eng04} for an introduction to the theory). It can be roughly divided into two parts: the first part gives a partition of the irreducible characters into so-called generalised Harish-Chandra series, the second part describes each such series in terms of a relative Weyl group and its associated cyclotomic Hecke algebra (see \cite{Bro-Mal93}). In this paper, we are interested in the first of these two statements and in its connection to the contractibility of the orbit space associated with the complex introduced in Definition \ref{def:New complex}.

Let $\G$, $F$, $q$ be as in the previous section and consider a positive integer $e$. An irreducible character $\chi\in\irr(\G^F)$ is called \textit{$e$-cuspidal} if $^*\R_\L^\G(\chi)=0$ for all proper $e$-split Levi subgroups $\L<\G$. Then, we say that an $e$-pair $(\L,\lambda)$ of $(\G,F)$ is \textit{$e$-cuspidal} if $\lambda$ is an $e$-cuspidal character of $\L^F$. Observe that an $e$-pair $(\L,\lambda)$ is $e$-cuspidal if and only if it is a minimum with respect to the order relation $\ll_e$. In particular, notice that the poset $\CL_e^\star(\chi)$ from Definition \ref{def:New complex} is empty if and only if $\chi$ is $e$-cuspidal. The following condition describes the uniqueness of $e$-cuspidal pairs, up to conjugation, and is a fundamental part of generalised Harish-Chandra theory.

\begin{defin}[$e$-HC-conj]
\label{def:HC theory}
Consider an irreducible character $\chi\in\irr(\G^F)$. We say that $\chi$ satisfies ($e$-HC-conj) in $\G^F$ if all $e$-cuspidal pairs $(\L,\lambda)$ with $(\L,\lambda)\leq_e(\G,\chi)$ are $\G^F$-conjugate. Then, we say that ($e$-HC-conj) holds below $\chi$, if for every $e$-pair $(\K,\kappa)\ll_e(\G,\chi)$, the character $\kappa$ satisfies ($e$-HC-conj) in $\K^F$.
\end{defin}

While the above condition is expected for every $e\geq 1$ and any $\chi\in\irr(\G^F)$, at present it is only known under particular assumptions. Below we collect the main cases where ($e$-HC-conj) has been verified. For this we recall the definition of a set of primes closely related to $\pi(\G,F)$.

\begin{defin}
\label{def:Primes, Gamma}
Let $\gamma(\G,F)$ be the set of odd primes, good for $\G$, and not dividing $q$ nor $|\z(\G)^F:\z^\circ(\G)^F|$. Consider a pair $(\G^*,F^*)$ in duality with $(\G,F)$ and define $\Gamma(\G,F)$ to be $(\gamma(\G,F)\cap \gamma(\G^*,F^*))\setminus\{3\}$, if $\G_{\rm ad}$ has a rational component of type ${^3{\bf D}_4}$, or $\gamma(\G,F)\cap \gamma(\G^*,F^*)$ otherwise.
\end{defin}

\begin{rmk}
\label{rmk:HC theory}
Consider a positive integer $e$, a prime $\ell$ not dividing $q$, and denote by $e_\ell(q)$ the order of $q$ modulo $\ell$ (modulo $4$, when $\ell=2$). Then ($e$-HC-conj) holds below $\chi$ whenever one of the following conditions is satisfied:
\begin{enumerate}
\item $e=1$ (see \cite[Corollary 3.1.17]{Gec-Mal20});
\item $\chi$ is unipotent (see \cite[Theorem 3.2]{Bro-Mal-Mic93};
\item $\chi$ belongs to a Lusztig series associated to a semisimple element of the dual group $\G^{*F^*}$ of order prime to $\ell$, $\ell\geq 5$ does not divide $q$ (with $\ell\neq 5$ if $\G$ has components of type ${\bf E}_8$), and $e=e_\ell(q)$ (see \cite[Theorem 4.1]{Cab-Eng04});
\item $\chi$ belongs to a Lusztig series associated to a quasi-isolated semisimple element of the dual group $\G^{*F^*}$ of order prime to $\ell$ bad for $\G$, with $\G$ is simple, simply connected, and of exceptional type, and $e=e_\ell(q)$ (see \cite[Theorem 1.4 (a)]{Kes-Mal13});
\item $\ell\in\Gamma(\G,F)$ with $\ell\geq 5$, and $e=e_\ell(q)$ (see \cite[Proposition 4.10]{Ros24}). Notice here that, while the statement of Proposition \cite[Proposition 4.10]{Ros24} assumes the validity of the Mackey formula, its proof also works without this assumption simply by considering the eventual dependency on ambient parabolic subgroups.
\end{enumerate}
\end{rmk}

\section{A Morse theoretic criterion for the contractibility of orbit spaces}
\label{sec:Morse theory}

Discrete Morse theory, developed in the 1990s by Bestvina--Brady \cite{Bes-Bra97} and Forman \cite{For98}, is an adaptation of the classical Morse theory for differentiable manifolds to the setting of cell complexes. Morse theoretic techniques have been successfully exploited to obtain results on the contractibility of orbit spaces for subgroup complexes. In \cite{Bux99}, Bux used discrete Morse theory, as developed by Bestvina and Brady \cite{Bes-Bra97}, to obtain a new proof of Webb's conjecture on the contractibility of the orbit space of the $p$-subgroups complex. Recently, another proof of Webb's conjecture was given by Steinberg \cite{Ste24} utilising, instead, the discrete Morse theory of Forman \cite{For98}. In this paper, we make use of discrete Morse theory by applying a criterion obtain by Bux in \cite{Bux99}. To do so, we first recall the relevant definitions.

Let $\Gamma$ be a directed graph. For any two vertices $u,v\in \Gamma$, we write $u\leftarrow v$ if there is an edge in $\Gamma$ from $v$ to $u$. A \textit{clique} in $\Gamma$ is a subset $S$ of vertices of $\Gamma$ such that for any pair of distinct elements $u,v\in S$ we have either $u\leftarrow v$ or $v\leftarrow u$. We can then associate a simplicial complex $\Fl(\Gamma)$, called the \textit{flag complex} (or the clique complex), to $\Gamma$ as follows: the vertices of $\Fl(\Gamma)$ are those of $\Gamma$ and a simplex $\sigma$ in $\Fl(\Gamma)$ is a finite clique of $\Gamma$. Consider now a group $G$ and assume that $\Gamma$ is a directed $G$-graph, that is, $G$ acts by automorphisms of directed graphs on $\Gamma$. If $\Gamma$ is \textit{well-founded}, i.e. $\Gamma$ has no directed paths of infinite length, then $\Fl(\Gamma)$ is a $G$-simplicial complex and we can form the orbit space $\Fl(\Gamma)/G$. While the latter might not be a simplicial complex, it turns out to be a CW-complex, to which discrete Morse theory applies. Bux's \cite[Corollary 13]{Bux99}, provides a criterion for the contractibility of $\Fl(\Gamma)/G$ in terms of the action of $G$ on certain subgraphs of $\Gamma$. More precisely, for any $u\in \Gamma$, let $\Gamma_\downarrow^u$ be the subgraph of $\Gamma$ spanned by those vertices $v\in \Gamma$ such that $v\leftarrow u$. For a simplex $\sigma\in\Fl(\Gamma)$, we set $\Gamma_\downarrow^\sigma:=\bigcap_u\Gamma_\downarrow^u$, where $u$ runs over the vertices of $\sigma$. Furthermore, we say that a vertex $u$ is \textit{minimal} if $\Gamma_\downarrow^u$ is empty, or equivalently, if there are no edges in $\Gamma$ with $u$ as starting vertex. Then, we say that a simplex $\sigma\in\Fl(\Gamma)$ is \textit{minimal} if it contains a minimal vertex. With these definitions, we can finally state Bux's criterion (see \cite[Corollary 13]{Bux99}).

\begin{prop}[Bux]
\label{prop:Bux}
Let $G$ be a group and $\Gamma$ a well-founded directed $G$-graph. Suppose that $G$ acts transitively on the set of minimal vertices of $\Gamma$ and that, for each non-minimal $\sigma\in\Fl(\Gamma)$, the stabiliser $G_\sigma$ acts transitively on the set of minimal elements of $\Gamma_\downarrow^\sigma$. Then the orbit space $\Fl(\Gamma)/G$ is contractible.
\end{prop}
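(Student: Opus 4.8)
The plan is to deduce the statement directly from the coning mechanism at the heart of Bestvina--Brady discrete Morse theory, organised as a well-founded induction that exploits the self-similar structure of the subgraphs $\Gamma_\downarrow^\sigma$. The first step is to observe that the two hypotheses are a single condition in disguise: for the empty simplex one has $\Gamma_\downarrow^\emptyset=\Gamma$ (an empty intersection) and $G_\emptyset=G$, and $\emptyset$ is non-minimal since it contains no minimal vertex, so the requirement that $G$ act transitively on the minimal vertices of $\Gamma$ is exactly the transitivity hypothesis applied to $\sigma=\emptyset$. I would therefore prove the uniform claim that \emph{for every non-minimal $\sigma\in\Fl(\Gamma)$ the orbit space $\Fl(\Gamma_\downarrow^\sigma)/G_\sigma$ is contractible}, the proposition being the case $\sigma=\emptyset$.

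Second, I would establish the topological engine, an orbit-space version of the Morse lemma. Using well-foundedness, equip the vertices of $\Gamma$ with the foundation rank $h$, so that minimal vertices have rank $0$ and $h$ strictly decreases along every edge $w\to v$ with $v\leftarrow w$; this $h$ is $G$-invariant, as $G$ preserves the directed structure. For a vertex $w$ of $\Gamma_\downarrow^\sigma$, its descending link in $\Fl(\Gamma_\downarrow^\sigma)$ is the full subcomplex on $(\Gamma_\downarrow^\sigma)_\downarrow^w=\Gamma_\downarrow^{\sigma\cup\{w\}}$, namely $\Fl(\Gamma_\downarrow^{\sigma\cup\{w\}})$, and the downward star $\mathrm{st}_\downarrow(w)=w\ast\mathrm{lk}_\downarrow(w)$ is a cone with apex $w$. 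Filtering $\Fl(\Gamma_\downarrow^\sigma)$ by $h$, the passage from rank $<t$ to rank $\le t$ glues in the downward stars of the new rank-$t$ vertices along their descending links; two distinct rank-$t$ vertices are incomparable (an edge would force distinct ranks) and hence share no simplex, so these stars attach independently and only along the lower part of the filtration. Since $G_\sigma$ fixes the apex $w$, quotienting commutes with coning, so the contribution of the orbit of $w$ to the orbit space is a cone on $\mathrm{lk}_\downarrow(w)/(G_\sigma)_w$. The key point is the elementary fact that attaching a cone on a contractible space, along any map, does not change the homotopy type (the mapping cone of a null-homotopic map out of a contractible space); thus, if every $\mathrm{lk}_\downarrow(w)/(G_\sigma)_w$ is contractible, each inclusion of successive orbit-space sublevels is a homotopy equivalence.

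Third comes the induction itself. The inductive order is generated by $\sigma\cup\{w\}\prec\sigma$ for $w\in\Gamma_\downarrow^\sigma$; this is well-founded, for an infinite $\prec$-descending chain would produce $w_0\to w_1\to\cdots$ with $w_{i+1}\in\Gamma_\downarrow^{w_i}$, an infinite directed path, contradicting well-foundedness of $\Gamma$. For the base of the filtration inside a fixed non-minimal $\sigma$, the rank-$0$ vertices of $\Gamma_\downarrow^\sigma$ are its minimal vertices, and any two distinct such vertices are incomparable (an edge between them would lie inside $\Gamma_\downarrow^\sigma$ and violate minimality), so they span a discrete subcomplex whose $G_\sigma$-orbit space is a single point by the transitivity hypothesis for $\sigma$. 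For the inductive step I must feed the Morse lemma: for each non-minimal $w\in\Gamma_\downarrow^\sigma$ the descending link's orbit space is $\Fl(\Gamma_\downarrow^{\sigma\cup\{w\}})/(G_\sigma)_w$. One checks that $\sigma\cup\{w\}$ is again a non-minimal simplex and that, because $h$ separates the vertices of $\sigma$ from the strictly lower vertex $w$, the stabiliser satisfies $(G_\sigma)_w=G_{\sigma\cup\{w\}}$; hence this orbit space is the instance of the uniform claim at $\sigma\cup\{w\}\prec\sigma$, contractible by the inductive hypothesis. The Morse lemma then shows that $\Fl(\Gamma_\downarrow^\sigma)/G_\sigma$ collapses through the filtration onto its rank-$0$ stratum, the point above, proving the claim for $\sigma$ and, at $\sigma=\emptyset$, the proposition.

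Finally, the main obstacle is the rigorous bookkeeping of the orbit-space Morse lemma for a non-free action, where $\Fl(\Gamma)/G$ is merely a CW-complex and not simplicial. The delicate points are: identifying the local quotient model $\mathrm{st}_\downarrow(w)/(G_\sigma)_w$ with a cone on $\mathrm{lk}_\downarrow(w)/(G_\sigma)_w$ (valid precisely because the apex is fixed), verifying that the relevant inclusions are cofibrations so the mapping-cone argument and the gluing lemma apply, and assembling the level-by-level equivalences across a possibly infinite filtration by passing to the colimit and invoking Whitehead's theorem. In the application of the paper $\Gamma$ is finite, so the filtration is finite and the assembly is immediate; the general case needs only the standard fact that a sequential colimit of homotopy equivalences along cofibrations is a homotopy equivalence onto the initial term.
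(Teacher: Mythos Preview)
The paper does not give its own proof of this proposition: it is quoted as Bux's criterion with a bare citation to \cite[Corollary~13]{Bux99}, and the neighbouring Lemma~\ref{lem:Bux} and Theorem~\ref{thm:Bux} are likewise stated with citations only. Your proposal is a faithful reconstruction of the Bestvina--Brady Morse argument underlying that reference, so at the level of strategy you and the paper (through Bux) take the same route.

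Two small points on the write-up. First, your uniform claim that $\Fl(\Gamma_\downarrow^\sigma)/G_\sigma$ is contractible for \emph{every} non-minimal $\sigma$ is slightly too strong when the edge relation of $\Gamma$ is not transitive: there can then exist non-minimal simplices $\sigma$ with $\Gamma_\downarrow^\sigma=\emptyset$ (take the path $a\leftarrow c\leftarrow d$ and $\sigma=\{c,d\}$), for which the orbit space is empty rather than contractible. This is harmless for the induction, since your inductive step only ever invokes the claim at $\sigma\cup\{w\}$ with $w$ non-minimal in $\Gamma_\downarrow^\sigma$, and then $\Gamma_\downarrow^{\sigma\cup\{w\}}$ is automatically non-empty; restricting the uniform claim to non-minimal $\sigma$ with $\Gamma_\downarrow^\sigma\neq\emptyset$ repairs it. Second, the assertion that ``the rank-$0$ vertices of $\Gamma_\downarrow^\sigma$ are its minimal vertices'' is not literally correct in the non-transitive setting, but the transitivity hypothesis on $G_\sigma$ saves you: the minimal elements of $\Gamma_\downarrow^\sigma$ form a single $G_\sigma$-orbit, hence share a common value $h_0$ of the $G$-invariant rank $h$, and every vertex of $\Gamma_\downarrow^\sigma$ has $h\geq h_0$ by well-foundedness; thus the bottom non-empty stratum of your filtration is exactly the set of minimal elements, as you need. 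With these cosmetic adjustments the argument goes through and is essentially the one being cited.
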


The above result relies on the following two statements that will also serve as the main ingredients of the proof Theorem \ref{thm:Main, converse}.

\begin{lem}[Bux]
\label{lem:Bux}
Let $\Gamma$ be a well-founded directed $G$-graph, consider the flag complex $\Fl(\Gamma)$ and suppose that $\Fl(\Gamma^\sigma_\downarrow)/G_\sigma$ is contractible for all simplices $\sigma\in\Fl(\Gamma)$. Denote by $(\Fl(\Gamma)/G)^{\leq 0}$ the discrete space whose points correspond to the $G$-orbits of minimal elements in $\Fl(\Gamma)$. Then the inclusion $(\Fl(\Gamma)/G)^{\leq 0}\hookrightarrow\Fl(\Gamma)/G$ is a homotopy equivalence.
\end{lem}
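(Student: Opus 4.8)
The plan is to prove this by running Bux's orbit-space version of Bestvina--Brady discrete Morse theory \cite{Bes-Bra97,Bux99} on the CW-complex $X:=\Fl(\Gamma)/G$, the hypothesis on the orbit spaces $\Fl(\Gamma^\sigma_\downarrow)/G_\sigma$ entering precisely as the contractibility of the descending links. First, since $\Gamma$ is well-founded the recursion $h(u):=\sup\{h(v)+1\mid v\leftarrow u\}$ defines an ordinal-valued height on the vertices of $\Gamma$; it is $G$-invariant, and $h(u)=0$ if and only if $u$ is minimal. Well-foundedness also forbids directed cycles, so every clique of $\Gamma$ is linearly ordered by the edge relation with $h$ strictly increasing from bottom to top; hence every simplex of $\Fl(\Gamma)$ has a unique top vertex, whose height is $0$ exactly when the simplex is a single minimal vertex. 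Declaring the level of a cell of $X$ to be the largest height of a vertex of $\Gamma$ occurring in it yields a filtration $\{X_{\leq t}\}$ of $X$ by subcomplexes, indexed by the ordinals, with $X=\bigcup_t X_{\leq t}$. Since distinct minimal vertices of $\Gamma$ are never adjacent, the level-$0$ subcomplex $X_{\leq 0}$ is the discrete subspace of $X$ consisting of the $G$-orbits of minimal vertices of $\Gamma$; that is, $X_{\leq 0}=(\Fl(\Gamma)/G)^{\leq 0}$. It therefore suffices to prove that $X_{<t}\hookrightarrow X_{\leq t}$ is a homotopy equivalence for every $t>0$.

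Fix $t>0$ and work in the CW-structure on $X$ obtained, if necessary, after a barycentric subdivision making the $G$-action admissible, with the filtration $\{X_{\leq t}\}$ as above. Then $X_{\leq t}$ is obtained from $X_{<t}$, up to homotopy equivalence, by attaching to each $G$-orbit of simplices of level $t$ a cone on the associated descending link, glued along that descending link through a cofibration. The crux --- and the step I expect to be the main obstacle --- is the identification of these descending links in the orbit space. The combinatorial content is clean: for any simplex $\sigma$ of $\Fl(\Gamma)$, a vertex $w$ adjacent in $\Gamma$ to every vertex of $\sigma$ with $h(w)<h(u)$ for all $u\in\sigma$ is forced to satisfy $w\leftarrow u$ for every such $u$, i.e.\ $w\in\Gamma^\sigma_\downarrow$, and conversely every element of $\Gamma^\sigma_\downarrow$ arises this way; thus the full subcomplex of the link of $\sigma$ in $\Fl(\Gamma)$ spanned by the vertices lying below all of $\sigma$ is exactly $\Fl(\Gamma^\sigma_\downarrow)$, with $G_\sigma$ acting on it. Passing to the quotient, the descending link attached at the orbit of $\sigma$ is, up to homotopy, $\Fl(\Gamma^\sigma_\downarrow)/G_\sigma$, which is contractible by hypothesis. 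Hence every cone added at level $t$ is glued along a contractible subcomplex through a cofibration, and the gluing lemma gives that $X_{<t}\hookrightarrow X_{\leq t}$ is a homotopy equivalence.

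To finish, we assemble transfinitely. We use the elementary fact that if a CW-complex $Y$ is a union of subcomplexes $\{Y_\alpha\}$ directed by inclusion and $A\subseteq Y$ is a subcomplex with $A\hookrightarrow Y_\alpha$ a homotopy equivalence whenever $A\subseteq Y_\alpha$, then $A\hookrightarrow Y$ is a homotopy equivalence. A transfinite induction on the level $t$ --- using at a successor level the factorisation $X_{\leq 0}\hookrightarrow X_{<t}\hookrightarrow X_{\leq t}$ (the first map a homotopy equivalence by the inductive hypothesis, the second by the preceding paragraph), and at a limit level the quoted fact applied to the subfiltration $\{X_{\leq s}\}_{s<t}$ of $X_{<t}$ together again with the preceding paragraph --- shows that $X_{\leq 0}\hookrightarrow X_{\leq t}$ is a homotopy equivalence for every $t$. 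A final application of the fact yields that $(\Fl(\Gamma)/G)^{\leq 0}=X_{\leq 0}\hookrightarrow X=\Fl(\Gamma)/G$ is a homotopy equivalence, as required. All of the delicate points --- the precise Morse function, the orbit CW-structure after subdivision, and the verification that the descending link attached at the orbit of $\sigma$ is governed, up to homotopy, by $\Fl(\Gamma^\sigma_\downarrow)/G_\sigma$ with stabiliser $G_\sigma$ --- are carried out in \cite{Bux99}, and the statement follows directly from the results proved there; the outline above merely indicates how that argument runs.
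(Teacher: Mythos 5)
Your argument is correct and takes essentially the same route as the paper: the proof there simply applies Bux's Lemma 9 to the CW-complex $X=\Fl(\Gamma)/G$ after identifying the descending link of the image of a simplex $\sigma$ with $\Fl(\Gamma^\sigma_\downarrow)/G_\sigma$ (Bux's Observation 10 and the equality on p.~48 of \cite{Bux99}), which is exactly the identification at the heart of your sketch. The only difference is that you unpack the Morse-theoretic machinery behind Bux's lemma (height function, coning off contractible descending links, transfinite assembly) where the paper just cites it, and you rightly defer the delicate points to \cite{Bux99} as well.
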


\begin{proof}
This follows immediately by \cite[Lemma 9]{Bux99} applied to the CW-complex $X=\Fl(\Gamma)/G$ after noticing that $\Fl(\Gamma^\sigma_\downarrow)/G_\sigma$ coincides with the descending link in $X$ of the image $\overline{\sigma}$ of $\sigma$ in $X$ (see \cite[Observation 10]{Bux99} and the equality in the first paragraph of \cite[p.48]{Bux99}).
\end{proof}

\begin{theo}[Bux]
\label{thm:Bux}
Let $\Gamma$ be a well-founded directed $G$-graph. Then the following are equivalent:
\begin{enumerate}
\item for every non-minimal simplex $\sigma$ in $\Fl(\Gamma)$, the orbit space $\Fl(\Gamma^\sigma_\downarrow)/G_\sigma$ is contractible.
\item for every non-minimal simplex $\sigma$ in $\Fl(\Gamma)$, the stabiliser $G_\sigma$ acts transitively on the set of minimal vertices in $\Gamma^\sigma_\downarrow$;
\end{enumerate}
\end{theo}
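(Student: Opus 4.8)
The plan is to prove the two implications separately, exploiting Lemma \ref{lem:Bux} as the bridge in both directions. Fix a well-founded directed $G$-graph $\Gamma$. The key observation, which I would establish first as a standalone remark, is that the hypothesis of Lemma \ref{lem:Bux} is a statement \emph{about all simplices at once}, whereas both conditions (i) and (ii) in the theorem concern a single non-minimal simplex $\sigma$; so the natural strategy is to induct on the ``depth'' of $\sigma$ in the well-founded graph $\Gamma^\sigma_\downarrow$, i.e.\ on the (finite, by well-foundedness) length of the longest directed path issuing from a vertex of $\sigma$. This allows me to replace the global hypothesis of Lemma \ref{lem:Bux} by an inductive one: when analysing $\sigma$, every simplex $\tau \in \Fl(\Gamma^\sigma_\downarrow)$ is strictly deeper, hence the relevant orbit spaces $\Fl((\Gamma^\sigma_\downarrow)^\tau_\downarrow)/(G_\sigma)_\tau = \Fl(\Gamma^{\sigma\cup\tau}_\downarrow)/G_{\sigma\cup\tau}$ are already under control. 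I should record the identification $(\Gamma^\sigma_\downarrow)^\tau_\downarrow = \Gamma^{\sigma\cup\tau}_\downarrow$ and $(G_\sigma)_\tau = G_{\sigma\cup\tau}$ explicitly, since it is what makes the induction close up; this is immediate from the definitions of $\Gamma^\sigma_\downarrow$ and of the flag complex.

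For the implication (i)$\Rightarrow$(ii): assume (i) holds for $\Gamma$. I want to deduce that for each non-minimal $\sigma$, the stabiliser $G_\sigma$ acts transitively on the minimal vertices of $\Gamma^\sigma_\downarrow$. Apply the inductive setup inside the graph $\Gamma^\sigma_\downarrow$ with the group $G_\sigma$: by the inductive hypothesis (on depth), every non-minimal simplex $\tau$ of $\Fl(\Gamma^\sigma_\downarrow)$ already satisfies the transitivity condition (ii) relative to $(G_\sigma, \Gamma^\sigma_\downarrow)$ — here one uses that (i) is inherited by $\Gamma^\sigma_\downarrow$, because $(\Gamma^\sigma_\downarrow)^\tau_\downarrow = \Gamma^{\sigma\cup\tau}_\downarrow$ and $\Fl(\Gamma^{\sigma\cup\tau}_\downarrow)/G_{\sigma\cup\tau}$ is contractible by (i) applied to $\sigma\cup\tau$. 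Then, once (ii) holds for all non-minimal simplices of $\Fl(\Gamma^\sigma_\downarrow)$, Proposition \ref{prop:Bux} applies to the pair $(G_\sigma,\Gamma^\sigma_\downarrow)$ provided $G_\sigma$ is transitive on the minimal vertices of $\Gamma^\sigma_\downarrow$ — but that is exactly what I am trying to prove, so instead I argue directly: by (i), $\Fl(\Gamma^\sigma_\downarrow)/G_\sigma$ is contractible; by Lemma \ref{lem:Bux} (whose hypothesis is now verified inside $\Gamma^\sigma_\downarrow$ by the inductive step), the inclusion of the discrete space $(\Fl(\Gamma^\sigma_\downarrow)/G_\sigma)^{\leq 0}$ into $\Fl(\Gamma^\sigma_\downarrow)/G_\sigma$ is a homotopy equivalence; a discrete space homotopy equivalent to a contractible space is a single point; and the points of $(\Fl(\Gamma^\sigma_\downarrow)/G_\sigma)^{\leq 0}$ are precisely the $G_\sigma$-orbits of minimal vertices of $\Gamma^\sigma_\downarrow$. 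Hence there is exactly one such orbit, which is transitivity.

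For the implication (ii)$\Rightarrow$(i): assume (ii) holds for all non-minimal simplices. Again induct on depth. Fix a non-minimal $\sigma$; I must show $\Fl(\Gamma^\sigma_\downarrow)/G_\sigma$ is contractible. Working inside $\Gamma^\sigma_\downarrow$ with group $G_\sigma$: condition (ii) is inherited (for non-minimal $\tau\in\Fl(\Gamma^\sigma_\downarrow)$, the claim ``$G_{\sigma\cup\tau}$ transitive on minimal vertices of $\Gamma^{\sigma\cup\tau}_\downarrow$'' is an instance of (ii) for the simplex $\sigma\cup\tau$ of $\Fl(\Gamma)$), so by the inductive hypothesis every non-minimal $\tau\in\Fl(\Gamma^\sigma_\downarrow)$ has contractible $\Fl((\Gamma^\sigma_\downarrow)^\tau_\downarrow)/(G_\sigma)_\tau$; trivially the same holds for minimal $\tau$, since then $(\Gamma^\sigma_\downarrow)^\tau_\downarrow$ is empty and the orbit space is a point. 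Thus the hypothesis of Lemma \ref{lem:Bux} is met for $(G_\sigma,\Gamma^\sigma_\downarrow)$, giving a homotopy equivalence $(\Fl(\Gamma^\sigma_\downarrow)/G_\sigma)^{\leq 0}\hookrightarrow \Fl(\Gamma^\sigma_\downarrow)/G_\sigma$; and (ii) applied to $\sigma$ itself says $G_\sigma$ is transitive on the minimal vertices of $\Gamma^\sigma_\downarrow$, so the discrete space on the left is a single point, whence the right-hand side is contractible. The base case of the induction, where $\Gamma^\sigma_\downarrow$ has no non-minimal simplices, i.e.\ consists only of minimal vertices with no edges, is handled directly: $\Fl(\Gamma^\sigma_\downarrow)/G_\sigma$ is then already discrete and equals its own $0$-skeleton, so Lemma \ref{lem:Bux} is vacuous and contractibility is again equivalent to transitivity on minimal vertices.

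The main obstacle I anticipate is purely bookkeeping rather than conceptual: making the depth induction airtight requires the clean identifications $(\Gamma^\sigma_\downarrow)^\tau_\downarrow = \Gamma^{\sigma\cup\tau}_\downarrow$, $(G_\sigma)_\tau = G_{\sigma\cup\tau}$, and the fact that $\sigma\cup\tau$ is indeed a simplex of $\Fl(\Gamma)$ whenever $\sigma\in\Fl(\Gamma)$ and $\tau\in\Fl(\Gamma^\sigma_\downarrow)$ (which holds because every vertex of $\tau$ is joined to every vertex of $\sigma$ by an edge pointing towards $\sigma$, so $\sigma\cup\tau$ is a clique). One must also be slightly careful that ``non-minimal simplex of $\Fl(\Gamma^\sigma_\downarrow)$'' corresponds under these identifications to a simplex $\sigma\cup\tau$ of $\Fl(\Gamma)$ which is non-minimal \emph{in $\Fl(\Gamma)$} — this is true because a vertex is minimal in $\Gamma^\sigma_\downarrow$ if and only if it is minimal in $\Gamma$ (minimality means having no outgoing edges in $\Gamma$ at all, and $\Gamma^\sigma_\downarrow$ is a full subgraph), so being non-minimal is unambiguous. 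Once these compatibilities are recorded once and for all at the start of the proof, both implications follow by the same two-line application of Lemma \ref{lem:Bux} plus the elementary fact that a discrete space homotopy equivalent to a contractible one is a point.
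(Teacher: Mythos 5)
The paper does not actually prove this statement: it is a transcription of \cite[Theorem 12]{Bux99}, and the ``proof'' is a one-line citation. Your proposal reconstructs the argument from Lemma \ref{lem:Bux} by induction on depth, which is essentially how Bux derives his Theorem 12 from his Lemma 9, so your route is self-contained where the paper delegates, and the strategy is the right one: once the descending links of all strictly deeper non-minimal simplices are known to be contractible, Lemma \ref{lem:Bux} identifies $\Fl(\Gamma^\sigma_\downarrow)/G_\sigma$ up to homotopy with the discrete set of $G_\sigma$-orbits of minimal vertices, and each implication then reads off from whether that set is a singleton. The identifications $(\Gamma^\sigma_\downarrow)^\tau_\downarrow=\Gamma^{\sigma\cup\tau}_\downarrow$ and $(G_\sigma)_\tau=G_{\sigma\cup\tau}$ are correct (for the latter, note that a finite clique in a well-founded directed graph is totally ordered by the edge relation, so an automorphism preserving $\sigma\cup\tau$ preserves the partition into the top part $\sigma$ and the bottom part $\tau$), and you are right that the direction (i)$\Rightarrow$(ii) needs no induction at all.

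Three of your auxiliary claims need repair, though none derails the argument in the case the paper uses. First, ``a vertex is minimal in $\Gamma^\sigma_\downarrow$ if and only if it is minimal in $\Gamma$'' is false for general directed graphs: an induced subgraph can acquire new minimal vertices, namely vertices all of whose out-neighbours lie outside $\Gamma^\sigma_\downarrow$. Only the implication you actually use survives --- minimal in $\Gamma$ implies minimal in $\Gamma^\sigma_\downarrow$, so a $\tau$ that is non-minimal in $\Fl(\Gamma^\sigma_\downarrow)$ (minimality taken intrinsically) gives a $\sigma\cup\tau$ that is non-minimal in $\Fl(\Gamma)$ --- and you must fix once and for all that ``minimal vertex of $\Gamma^\sigma_\downarrow$'' in (ii) and in the $\leq 0$ skeleton means minimal in the induced subgraph; in the comparability graph of a poset, which is all the paper needs, the two notions coincide by transitivity. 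Second, for a minimal simplex $\tau$ the space $\Fl\bigl((\Gamma^\sigma_\downarrow)^\tau_\downarrow\bigr)/(G_\sigma)_\tau$ is empty, not a point, hence not contractible; the hypothesis of Lemma \ref{lem:Bux} has to be read as ranging over non-minimal simplices only (as in Bux's Lemma 9, which concerns descending links of cells of positive height), after which your appeal to it is fine. Third, the induction quantity should be the length of the longest directed path issuing from the \emph{bottom} vertex of $\sigma$ (equivalently, the minimum over its vertices), since the maximum does not decrease when passing from $\sigma$ to $\sigma\cup\tau$. A residual caveat, inherited from the statement as transcribed rather than from your argument: for a completely general well-founded directed graph a non-minimal simplex could have $\Gamma^\sigma_\downarrow=\emptyset$, in which case (ii) is vacuous while (i) fails; this cannot happen for comparability graphs of posets, where $\Gamma^\sigma_\downarrow=\Gamma^{u_0}_\downarrow$ for $u_0$ the bottom vertex of $\sigma$.
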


\begin{proof}
This follows from \cite[Theorem 12]{Bux99} after noticing the equality in the first paragraph of \cite[p.48]{Bux99}.
\end{proof}

\section{Proof of Theorem \ref{thm:Main} and Theorem \ref{thm:Main, converse}}
\label{sec:Proofs}

To prove Theorem \ref{thm:Main}, we will make use of Bux's criterion from the previous section. For this, we first need to provide a connection between the flag complex of a direct graph and the order complex of a poset. This is done by using the comparability graph. Let $\mathcal{P}$ be a poset with respect to the order relation $\leq$. The \textit{comparability graph} of $\mathcal{P}$ is the directed graph $\Gamma(\mathcal{P})$ with vertex set $\mathcal{P}$ and with an edge $u\leftarrow v$ whenever $u\leq v$ as elements in the poset $\mathcal{P}$. In the following lemma, we observe that the flag complex of the comparability graph of a poset is homeomorphic to the order complex of the poset.

\begin{lem}
\label{lem:Comparability graph}
For any poset $\mathcal{P}$ there is a canonical homeomorphism between $\Delta(\mathcal{P})$ and $\Fl(\Gamma(\mathcal{P}))$.
\end{lem}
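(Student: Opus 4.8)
The plan is to show that $\Delta(\mathcal{P})$ and $\Fl(\Gamma(\mathcal{P}))$ are in fact the \emph{same} abstract simplicial complex, so that the identity map on vertex sets induces the asserted canonical homeomorphism of geometric realisations. Both complexes have the same set of vertices, namely the underlying set of $\mathcal{P}$, so it suffices to check that they have the same simplices.

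First I would recall that a simplex of $\Delta(\mathcal{P})$ is, by definition, a finite totally ordered subset (a chain) $\sigma\subseteq\mathcal{P}$, whereas a simplex of $\Fl(\Gamma(\mathcal{P}))$ is a finite clique of $\Gamma(\mathcal{P})$. Unwinding the definition of the comparability graph, a finite subset $\sigma\subseteq\mathcal{P}$ is a clique precisely when, for every pair of distinct elements $u,v\in\sigma$, there is an edge $u\leftarrow v$ or $v\leftarrow u$ in $\Gamma(\mathcal{P})$, i.e. precisely when $u\leq v$ or $v\leq u$. This says exactly that $\sigma$ is totally ordered by $\leq$, that is, that $\sigma$ is a chain. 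Hence the simplices of the two complexes coincide, and the identity map $\mathcal{P}\to\mathcal{P}$ on vertices is an isomorphism of abstract simplicial complexes $\Delta(\mathcal{P})\cong\Fl(\Gamma(\mathcal{P}))$; passing to geometric realisations yields the desired homeomorphism, which is canonical in the sense of being functorial in $\mathcal{P}$.

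The only point requiring a word of care is that $\Gamma(\mathcal{P})$ is a directed graph and, as defined, carries a loop $u\leftarrow u$ at every vertex (since $u\leq u$), while antisymmetry of $\leq$ guarantees that for distinct $u\neq v$ at most one of $u\leftarrow v$, $v\leftarrow u$ occurs. Neither observation affects the clique condition, which constrains only pairs of \emph{distinct} vertices and demands only an inclusive ``or'', so the identification of cliques with chains goes through verbatim. There is no genuine obstacle here: the content of the lemma is the bookkeeping observation that ``chain in $\mathcal{P}$'' and ``clique in the comparability graph of $\mathcal{P}$'' are two names for the same thing.
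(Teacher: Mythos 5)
Your argument is correct and is essentially the paper's own proof: both identify the simplices of $\Delta(\mathcal{P})$ (finite chains) with the simplices of $\Fl(\Gamma(\mathcal{P}))$ (finite cliques of the comparability graph) via the identity on vertices, yielding the canonical homeomorphism. Your extra remarks about loops and antisymmetry are harmless additions that do not change the substance.
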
 

\begin{proof}
Let $\sigma$ be a simplex in $\Delta(\mathcal{P})$. By definition $\sigma$ is a totally ordered subset of $\mathcal{P}$. In particular, when we see $\sigma$ as a set of vertices in $\Gamma(\mathcal{P})$, we have that either $u\leftarrow v$ or $v\leftarrow u$, for any $u\neq v$ in $\sigma$. In other words, $\sigma$ is a clique in $\Gamma$ and so a simplex in $\Fl(\Gamma(\mathcal{P}))$. Conversely, if we consider a simplex $\sigma$ in $\Fl(\Gamma(\mathcal{P}))$, then we can view $\sigma$ as a totally ordered subset of $\mathcal{P}$, that is, as a simplex of $\Delta(\mathcal{P})$. Therefore, the assignment $\sigma\mapsto\sigma$ induces a homeomorphism between $\Delta(\mathcal{P})$ and $\Fl(\Gamma(\mathcal{P}))$ as desired.
\end{proof}

Thanks to Lemma \ref{lem:Comparability graph}, we can now apply Proposition \ref{prop:Bux} to the orbit space $\Delta(\CL_e^\star(\chi))/\G^F$ and prove Theorem \ref{thm:Main}.

\begin{proof}[Proof of Theorem \ref{thm:Main}]
Consider a positive integer $e$ and an irreducible character $\chi\in\irr(\G^F)$. Assuming that ($e$-HC-conj) holds below $\chi$, we want to show that the orbit space $\Delta(\CL_e^\star(\chi))/\G^F$ is contractible. Thanks to Lemma \ref{lem:Comparability graph} we know that $\Delta(\CL_e^\star(\chi))$ is canonically homeomorphic to the flag complex $\Fl(\Gamma(\CL_e^\star(\chi)))$. Therefore, it suffices to verify the hypothesis of Proposition \ref{prop:Bux} with respect to the graph $\Gamma(\CL_e^\star(\chi))$.

First, observe that $\Gamma(\CL_e^\star(\chi))$ is a directed $\G^F$-graph and that it is well-founded because its edges are induced by an order relation and $\CL_e^\star(\chi)$ is a finite set. Next, consider a minimal vertex $(\L,\lambda)$ of $\Gamma(\CL_e^\star(\chi))$. The minimality of $(\L,\lambda)$ implies that no $e$-pair $(\K,\kappa)$ can be found such that $(\K,\kappa)\ll_e(\L,\lambda)$ and $\K\neq \L$. Therefore $(\L,\lambda)$ is an $e$-cuspidal pair of $(\G,F)$. Since, by assumption, ($e$-HC-conj) holds for $\chi$, we deduce that $\G^F$ acts transitively on the minimal vertices of $\Gamma(\CL_e^\star(\chi))$.

Next, consider a non-minimal simplex $\sigma$ in $\Fl(\Gamma(\CL_e^\star(\chi)))$. We claim that there is a vertex in $\sigma$, that we denote by $(\L(\sigma),\lambda(\sigma))$, such that $(\L(\sigma),\lambda(\sigma))\leftarrow (\K,\kappa)$ for every vertex $(\K,\kappa)$ in $\sigma$. First, notice that $\sigma$ has vertices that are minimal amongst those of $\sigma$, for otherwise we could find an infinite path in $\Gamma(\CL_e^\star(\chi))$. Suppose that $(\K,\kappa)$ and $(\M,\mu)$ are two such vertices in $\sigma$. Since $\sigma$ is a clique, we know that either $(\K,\kappa)\leftarrow(\M,\mu)$ or $(\M,\mu)\leftarrow(\K,\kappa)$. This is only possible if $(\K,\kappa)=(\M,\mu)$ and so there is a unique such element as claimed. Now, it follows from \cite[Remark 11]{Bux99} that
\[\Gamma(\CL_e^\star(\chi))^\sigma_\downarrow=\Gamma(\CL_e^\star(\chi))^{(\L(\sigma),\lambda(\sigma))}_\downarrow.\]
Hence, it remains to show that the stabiliser $\G^F_\sigma$ acts transitively on $\Gamma(\CL_e^\star(\chi))^{(\L(\sigma),\lambda(\sigma))}_\downarrow$. By definition, the latter is the subgraph spanned by the vertices $(\K,\kappa)$ of $\Gamma(\CL_e^\star(\chi))$ such that $(\K,\kappa)\leftarrow(\L(\sigma),\lambda(\sigma))$ and therefore coincides with the comparability graph $\Gamma(\CL_e^\star(\lambda(\sigma)))$ of the subposet $\CL_e^\star(\lambda(\sigma))$ of $\CL_e^\star(\chi)$. Observe now that $(\L(\sigma),\lambda(\sigma))$ is not $e$-cuspidal because $\sigma$ is a non-minimal simplex in $\Fl(\Gamma(\CL_e^\star(\chi)))$. In particular, $\CL_e^\star(\lambda(\sigma))$ is non-empty. Now, as argued before, any minimal vertex $(\L,\lambda)$ of $\Gamma(\CL_e^\star(\lambda(\sigma)))$ is an $e$-cuspidal pair in $(\L(\sigma),F)$. Since ($e$-HC-conj) holds below $\chi$, it holds for $\lambda(\sigma)$ in $\L(\sigma)^F$ and therefore $\L(\sigma)^F$ acts transitively on the set of minimal vertices of $\Gamma(\CL_e^\star(\lambda(\sigma)))$. To conclude, notice that $\L(\sigma)^F$ is contained in the stabiliser $\G^F_\sigma$. In fact, for every $(\K,\kappa)$ in $\sigma$ we have $(\L(\sigma),\lambda(\sigma))\leftarrow(\K,\kappa)$ which implies that $\L(\sigma)^F\leq \K^F$. This shows that $\L(\sigma)^F$ normalises $\K^F$ and $\kappa$, for every $(\K,\kappa)$ in $\sigma$. Therefore $\G^F_\sigma$ acts transitively on the minimal vertices of $\Gamma(\CL_e^\star(\chi))^\sigma_\downarrow$ as required. The proof is now complete.
\end{proof}

As a consequence of Theorem \ref{thm:Main}, we obtain that the orbit space $\Delta(\CL_e^\star(\chi))/\G^F$ is contractible, whenever any of the conditions of Remark \ref{rmk:HC theory} is satisfied. In particular, we obtain Corollary \ref{cor:Main} as a consequence of Remark \ref{rmk:HC theory} (v). Finally, we prove Theorem \ref{thm:Main, converse}.

\begin{proof}[Proof of Theorem \ref{thm:Main, converse}]
Suppose that the statement fails to hold. Then, we can find some $e$-pair $(\K,\kappa)$ of $(\G,F)$ with $(\K,\kappa)\ll_e(\G,\chi)$ such that ($e$-HC-conj) does not hold for $\kappa$ in $\K^F$. Notice that $(\K,\kappa)$ cannot be an $e$-cuspidal pair and hence $\CL_e^\star(\kappa)$ is non-empty. We may also assume that the $e$-pair $(\K,\kappa)$ has been minimised with respect to $\dim(\K)$, so that ($e$-HC-conj) holds for $\lambda$ in $\L^F$ whenever $(\L,\lambda)\ll_e(\G,\chi)$ and $\dim(\L)<\dim(\K)$.

We now consider the flag complex $\Fl(\Gamma(\CL_e^\star(\kappa)))$ and fix one of its simplices, say $\tau$. We claim that $\Fl(\Gamma(\CL_e^\star(\kappa))^\tau_\downarrow)/\K^F_\tau$ is contractible. Without loss of generality, we may assume that $\tau$ is non-minimal. Then, the claim follows from Theorem \ref{thm:Bux}, provided that, for every non-minimal simplex $\rho$ in $\Fl(\Gamma(\CL_e^\star(\kappa)))$, the stabiliser $\K^F_\rho$ acts transitively on the set of minimal vertices of $\Gamma(\CL_e^\star(\kappa))^\rho_\downarrow$. To prove this fact, notice that, given any such simplex $\rho$, we can find a unique vertex $(\L(\rho),\lambda(\rho))$ in $\rho$ such that $(\L(\rho),\lambda(\rho))\leftarrow(\M,\mu)$ for every $(\M,\mu)$ in $\rho$ (see the argument used in the third paragraph of the proof of Theorem \ref{thm:Main}). By \cite[Remark 11]{Bux99} we get $\Gamma(\CL_e^\star(\kappa))^\rho_\downarrow=\Gamma(\CL_e^\star(\kappa))^{(\L(\rho),\lambda(\rho))}_\downarrow$ and hence the minimal vertices of $\Gamma(\CL_e^\star(\kappa))^\rho_\downarrow$ are precisely the $e$-cuspidal pairs $(\L,\lambda)$ of $(\K,F)$ satisfying $(\L,\lambda)\ll_e(\L(\rho),\lambda(\rho))$. Since $\L(\rho)< \K$, we obtain $\dim(\L(\rho))<\dim(\K)$ (see, for instance, \cite[Proposition 1.22]{Mal-Tes}) and therefore ($e$-HC-conj) holds for $\lambda(\rho)$ in $\L(\rho)^F$ by the minimality of $\K$. This in turns shows that $\K_\rho^F$ (which contains $\L(\rho)^F$) acts transitively on the set of minimal vertices of $\Gamma(\CL_e^\star(\kappa))^\rho_\downarrow$. We conclude that $\Fl(\Gamma(\CL_e^\star(\kappa))^\tau_\downarrow)/\K^F_\tau$ is contractible, as claimed above.

Now, we can apply Lemma \ref{lem:Bux} to $\Gamma(\CL_e^\star(\kappa))$ to show that the orbit space $\Fl(\Gamma(\CL_e^\star(\kappa)))/\K^F$ is homotopy equivalent to $(\Fl(\Gamma(\CL_e^\star(\kappa)))/\K^F)^{\leq 0}$. Recall that the latter is the discrete space whose points correspond to the $\K^F$-orbits of minimal vertices in $\Gamma(\CL_e^\star(\kappa))$, that is the $\K^F$-orbits of $e$-cuspidal pairs $(\L,\lambda)\ll_e(\K,\kappa)$. Since $\Fl(\Gamma(\CL_e^\star(\kappa)))/\K^F\simeq \Delta(\CL_e^\star(\kappa))/\K^F$ is contractible by hypothesis, and using Lemma \ref{lem:Comparability graph}, we conclude that $(\Fl(\Gamma(\CL_e^\star(\kappa)))/\K^F)^{\leq 0}$ consists of a single point, whence ($e$-HC-conj) holds for $\kappa$ in $\K^F$. This contradicts the choice of $(\K,\kappa)$ and the proof is now complete.
\end{proof}


\newcommand{\etalchar}[1]{$^{#1}$}

\vspace{0.5cm}

\mbox{{\sc{FB Mathematik, RPTU Kaiserslautern--Landau, Postfach 3049, 67653 Kaiserslautern, Germany}}}

\textit{Email address:} \href{mailto:damiano.rossi.math@gmail.com}{damiano.rossi.math@gmail.com}

\end{document}